\date{\today}
\numberwithin{equation}{section}   
\theoremstyle{plain}
\newtheorem{thm}{Theorem}[section]
\newtheorem{cor}[thm]{Corollary}
\theoremstyle{definition}
\theoremstyle{remark}
\newtheorem{remark}[thm]{Remark}
\newcommand{\X}{\mathbf{X}}
\newcommand{\cn}{\colon}
\title{The Gronwall Inequality}
\author{Ralph Howard}
\address{Department of Mathematics,
University of South Carolina,
Columbia, SC 29208}
\email{howard@math.sc.edu}
\urladdr{www.math.sc.edu/$\sim$howard}
\subjclass{34A12, 34A34}
\keywords{Gronwall inequality}
\begin{document}
\maketitle

\section{Introduction.}

The Gronwall inequality as given here estimates the difference of
solutions to two differential equations $y'(t)=f(t,y(t))$ and
$z'(t)=g(t,z(t))$ in terms of the difference
between the initial conditions for the equations and the difference
between $f$ and $g$.  The usual version of the inequality is when
$f=g$, but there is little extra work involved in proving the more general
case \cite{Gronwall}.
\medskip

This note started life as note/homework problem on a web page for
an ordinary differential equation class in 1998.  Since then
it has been referenced several times: 
\cite{Bourgain-Klein,Klein-Tsang,ABBDDR,SQWZT,Srikrishnan-Chaudhuri}. 
So it seemed worthwhile to put a copy on the
arXiv so that there is a version more permanent than
one on a personal web page.  While I do not know of an
explicit reference to the inequality here, I very much doubt
it is original to me.

\section{The Inequality}

\begin{thm}[The Gronwall Inequality]\label{thm:gronwall}
Let $\X$ be a Banach space and $U\subset \X$ an open convex set in
$\X$.  Let $f,g\cn [a,b]\times U\to \X$ be continuous functions and
let $y,z\cn [a,b]\to U$ satisfy the initial value problems
\begin{align}
y'(t)&=f(t,y(t)),\quad y(a)=y_0,\\
z'(t)&=g(t,z(t)),\quad z(a)=z_0.
\end{align}
Also assume there is a constant $C\ge 0$ so that
\begin{equation}\label{lip}
\|g(t,x_2)-g(t,x_1)\|\le C\|x_2-x_1\|
\end{equation}
and a continuous function $\phi\cn [a,b]\to [0,\infty)$ 
so that
\begin{equation}\label{f-g}
\|f(t,y(t))-g(t,y(t))\|\le \phi(t).
\end{equation}
Then for $t\in [a,b]$
\begin{equation}\label{gronwall}
\|y(t)-z(t)\|
 \le e^{C|t-a|}\|y_0-z_0\|+e^{C|t-a|}\int_a^te^{-C|s-a|}\phi(s)\,ds.
\end{equation}
\end{thm}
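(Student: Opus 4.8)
The plan is to reduce the Banach-space statement to a scalar integral inequality of Gronwall type and then solve that inequality. Because $y$ and $z$ are $C^1$ solutions of the two initial value problems, the fundamental theorem of calculus converts them to the integral equations $y(t)=y_0+\int_a^t f(s,y(s))\,ds$ and $z(t)=z_0+\int_a^t g(s,z(s))\,ds$. Subtracting these and inserting $\pm\, g(s,y(s))$ into the integrand, I would split the difference of integrands as $[f(s,y(s))-g(s,y(s))]+[g(s,y(s))-g(s,z(s))]$. Taking norms, applying the triangle inequality, and invoking the two hypotheses \eqref{f-g} and \eqref{lip} then gives, for $t\ge a$,
\begin{equation*}
u(t)\le \|y_0-z_0\|+\int_a^t\phi(s)\,ds+C\int_a^t u(s)\,ds,
\qquad u(t):=\|y(t)-z(t)\|.
\end{equation*}

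The second step is to linearize this implicit inequality by comparison with a differentiable majorant. I would let $v(t)$ denote the entire right-hand side, so that $u(t)\le v(t)$ and $v(a)=\|y_0-z_0\|$. Since $\phi$ and $u$ are continuous, $v$ is $C^1$ with $v'(t)=\phi(t)+Cu(t)\le \phi(t)+Cv(t)$, i.e.\ $v'(t)-Cv(t)\le\phi(t)$. Multiplying this first-order linear differential inequality by the integrating factor $e^{-C(t-a)}$ turns the left side into the exact derivative $\frac{d}{dt}\bigl(e^{-C(t-a)}v(t)\bigr)$, which is therefore bounded above by $e^{-C(t-a)}\phi(t)$. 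Integrating from $a$ to $t$ and rearranging yields precisely the bound \eqref{gronwall} (with $|t-a|=t-a$), and since $u\le v$ the same bound holds for $\|y(t)-z(t)\|$.

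The principal thing to get right, and the reason the majorant $v$ is essential rather than a convenience, is that in a general Banach space the norm is not differentiable, so one cannot simply differentiate $u(t)=\|y(t)-z(t)\|$ and set up a differential inequality for it directly; passing through the integral form and the smooth majorant sidesteps this entirely. The remaining point requiring care is the case $t<a$, which accounts for the absolute values $|t-a|$ and $|s-a|$ in \eqref{gronwall}. I would handle it by reversing time, applying the substitution $s\mapsto 2a-s$ to replace $y,z,f,g,\phi$ by their reflections about $a$ (the Lipschitz constant $C$ is preserved), thereby reducing to the already-proved forward case $t\ge a$; here the main subtlety is tracking the orientation of the integral so that the $\phi$-term continues to enter with the correct sign.
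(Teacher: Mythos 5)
Your proof is correct, but it takes a genuinely different route from the paper's. The paper differentiates the scalar function $t\mapsto\|y(t)-z(t)\|$ directly: it observes that for $C^1$ curves $x$ in $\X$ the map $t\mapsto\|x(t)\|$ is Lipschitz, hence absolutely continuous, hence differentiable almost everywhere with $\frac{d}{dt}\|x(t)\|\le\|x'(t)\|$ and with the Fundamental Theorem of Calculus still available; it then obtains the differential inequality $\frac{d}{dt}\|y(t)-z(t)\|\le\phi(t)+C\|y(t)-z(t)\|$ pointwise a.e.\ and applies the same integrating factor you use. You instead pass to the integral equations, arrive at the implicit integral inequality $u(t)\le\|y_0-z_0\|+\int_a^t\phi(s)\,ds+C\int_a^t u(s)\,ds$, and linearize it through the $C^1$ majorant $v$. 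What your route buys is the complete avoidance of any measure-theoretic input: you never need a.e.\ differentiability of Lipschitz functions or the absolutely continuous version of the FTC, only continuity of $u$ and ordinary calculus applied to $v$. What the paper's route buys is brevity and a reusable one-line lemma ($\frac{d}{dt}\|x(t)\|\le\|x'(t)\|$); your observation that the norm need not be differentiable in a general Banach space is the right thing to worry about, though it slightly overstates the obstruction, since a.e.\ differentiability suffices and is what the paper invokes. Two small remarks: your final paragraph on the case $t<a$ is moot, since the theorem takes $t\in[a,b]$ so that $|t-a|=t-a$ and $|s-a|=s-a$ throughout (the absolute values only matter if one reorients the interval, and your time-reversal substitution would indeed handle that); and your argument, unlike the paper's as literally written, lands exactly on \eqref{gronwall} without the paper's typographical slips in the final integration step.
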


\begin{remark}
The inequality~(\ref{f-g}) is a little awkward as it involves an
inequality along the solution $y(t)$ which we may not know.  But
we can replace~(\ref{f-g}) by the stronger hypothesis
\begin{equation}\label{uniform-f-g}
\|f(t,x)-g(t,x)\|\le \phi(t)\quad \text{for all} \quad x\in U
\end{equation}
(which clearly implies~(\ref{f-g})) and get the same result.  Of course
this may mean using a choice of $\phi$ that is larger than is needed
in~(\ref{f-g}).\qed  
\end{remark}

\begin{remark}
Note we are not assuming $f$ satisfies a Lipschitz condition and
therefore solutions to the initial value problem $y'(t)=f(t,y(t))$,
$y(a)=y_0$ need not be unique.  In this case the inequality can be
used to estimate $y(t)$ by comparing it to the solution to
$z'(t)=g(t,z(t))$, $z(a)=y_0$ with the same initial condition.  In
this case the inequality becomes
$$
\|y(t)-z(t)\| \le e^{C|t-a|}\int_a^te^{-C|s-a|}\phi(s)\,ds.
$$
Then if we assume that the inequality~(\ref{uniform-f-g}) holds then
and $y_1'(t)=f(t,y_1(t))$ with $y_1(a)=y_1$ then using the obvious
inequality $\|y(t)-y_1(t)\|\le \|y(t)-z(t)\|+\|z(t)-y_1(t)\|$ where
$z'(t)=g(t,z(t))$, $z(a)=y_0$ we get
the inequality
$$
\|y(t)-y_1(t)\|\le
e^{C|t-a|}\|y_0-y_1\|+2e^{C|t-a|}\int_a^te^{-C|s-a|}\phi(s)\,ds. 
$$
This gives a version of the Gronwall inequality for differential
equations that do not satisfy a Lipschitz condition in terms of the
Lipschitz constant of a ``nearby'' differential equation that does
(where near is ``nearby'' is measured by the size of $\phi$).\qed
\end{remark}

\begin{proof}[Proof of Theorem~\ref{gronwall}]  
Let $x\cn [a,b]\to \X$ be continuously differentiable.
Then the map $t\mapsto \|x(t)\|$ is Lipschitz and 
therefore absolutely continuous.  Thus it is differentiable
almost everywhere, the Fundamental Theorem of Calculus applies,
and it is easily checked that $ \dfrac{d}{dt} \|x'(t)\|\le \| x'(t)\|$.
  Whence, setting $x(t) = y(t) - y(t)$,  using the
hypothesis~(\ref{f-g}), and~(\ref{lip}),
\begin{align*}
\dfrac{d}{dt}\|y(t)-z(t)\|&\le \|y'(t)-z'(t)\|\\
	&=\|f(t,y(t))-g(t,z(t))\|\\
	&\le \|f(t,y(t))-g(t,y(t))\|+\|g(t,y(t))-g(t,z(t))\|\\
	&\le \phi(t) +C\|y(t)-z(t)\|.
\end{align*}
That is 
$$
\dfrac{d}{dt}\|y(t)-z(t)\|-C\|y(t)-z(t)\|\le \phi(t).
$$
Multiplication by the integrating factor $e^{-Ct}$ yields
$$
\dfrac{d}{dt}\left(e^{-Ct}\|y(t)-z(t)\|\right)\le e^{-Ct}\phi(t).
$$
Integrate this from $a$ to $t$ to get
$$
e^{-Ct}\|y(t)-z(t)\|-\|y_0-z_0\|\le \int_0^te^{-Cs}\phi(s)\,ds.
$$
This is equivalent to~(\ref{gronwall}).
\end{proof}

The following is the standard form of the Gronwall inequality.

\begin{cor}
Let $\X$ be a Banach space and $U\subset \X$ an open convex set in
$\X$.  Let $f\cn [a,b]\times U\to \X$ be a continuous function and
let $y,z\cn [a,b]\to U$ satisfy the initial value problems
\begin{align}
y'(t)&=f(t,y(t)),\quad y(a)=y_0,\\
z'(t)&=f(t,z(t)),\quad z(a)=z_0.
\end{align}
Also assume there is a constant $C\ge 0$ so that
\begin{equation}\label{std:lip}
\|f(t,x_2)-f(t,x_1)\|\le C\|x_2-x_1\|.
\end{equation}
Then for $t\in [a,b]$
\begin{equation}\label{std:gronwall}
\|y(t)-z(t)\|
 \le e^{C|t-a|}\|y_0-z_0\|.
\end{equation}
\end{cor}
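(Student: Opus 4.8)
The plan is to obtain this Corollary as an immediate specialization of Theorem~\ref{thm:gronwall}, taking $g=f$. With this choice the two initial value problems of the Corollary are literally those of the theorem, and the Lipschitz hypothesis~(\ref{std:lip}) is exactly hypothesis~(\ref{lip}) read with $g$ replaced by $f$. So most of what needs checking is already in place by inspection.

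The one point I would verify carefully is the choice of the comparison function $\phi$. Since $g=f$, the quantity controlled in~(\ref{f-g}) is
\[
\|f(t,y(t))-g(t,y(t))\|=\|f(t,y(t))-f(t,y(t))\|=0,
\]
so the constant function $\phi\equiv 0$ is an admissible choice: it is continuous, maps $[a,b]$ into $[0,\infty)$, and satisfies~(\ref{f-g}) trivially. Having confirmed every hypothesis of the theorem, I would simply invoke the conclusion~(\ref{gronwall}) with these choices.

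Substituting $\phi\equiv 0$ into~(\ref{gronwall}) makes the integral term $e^{C|t-a|}\int_a^t e^{-C|s-a|}\phi(s)\,ds$ vanish, leaving precisely the bound~(\ref{std:gronwall}). There is no real obstacle here: the content of the Corollary is entirely contained in the theorem, and the only (trivial) thing to confirm is that $\phi\equiv 0$ is a legitimate comparison function rather than one needing a strictly positive bound. I therefore expect the proof to be a single sentence once the substitution is spelled out.
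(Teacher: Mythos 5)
Your proposal is correct and is exactly the paper's own argument: specialize Theorem~\ref{thm:gronwall} with $g=f$, take $\phi\equiv 0$ in~(\ref{f-g}), and observe that~(\ref{gronwall}) then collapses to~(\ref{std:gronwall}). Your extra care in checking that $\phi\equiv 0$ is an admissible comparison function is sound but adds nothing beyond the paper's one-line proof.
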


\begin{proof}
In Theorem~\ref{thm:gronwall} let $f=g$.  Then we can take
$\phi(t)\equiv0$ in~(\ref{f-g}).  Then~(\ref{gronwall}) reduces
to~(\ref{std:gronwall}).
\end{proof}

\section*{Acknowledgement} 
I wish to thank Josh Zahl for encouraging me 
put this version on the axXiv.

\providecommand{\bysame}{\leavevmode\hbox to3em{\hrulefill}\thinspace}
\providecommand{\MR}{\relax\ifhmode\unskip\space\fi MR }
\providecommand{\MRhref}[2]{%
  \href{http://www.ams.org/mathscinet-getitem?mr=#1}{#2}
}
\providecommand{\href}[2]{#2}

\end{document}